\newcommand{\half}{0.5}
\newcommand{\ang}[1]{\langle#1\rangle}
\newcommand{\goes}{\rightarrow}
\newcommand{\nat}{{\sf N}}
\newcommand{\xvec}[1]{\ifcase 3{#1} {\ang {x_1,x_2,x_3} } \else 
\ifcase 4{#1} {\ang{x_1,x_2,x_3,x_4}} \else {\ang {x_1,\ldots,x_{#1}}}\fi\fi}
\newcommand{\yvec}[1]{\ifcase 3{#1} {\ang {y_1,y_2,y_3} } \else 
\ifcase 4{#1} {\ang{y_1,y_2,y_3,y_4}} \else {\ang {y_1,\ldots,y_{#1}}}\fi\fi}
\newcommand{\zvec}[1]{\ifcase 3{#1} {\ang {z_1,z_2,z_3} } \else 
\ifcase 4{#1} {\ang{z_1,z_2,z_3,z_4}} \else {\ang {z_1,\ldots,z_{#1}}}\fi\fi}
\newcommand{\vecc}[2]{\ifcase 3{#2} {\ang { {#1}_1,{#1}_2,{#1}_3 } } \else
\ifcase 4{#1} {\ang { {#1}_1,{#1}_2,{#1}_3,{#1}_{4} } }
\else {\ang { {#1}_1,\ldots,{#1}_{#2}}}\fi\fi}
\newcommand{\veccd}[3]{\ifcase 3{#2} {\ang { {#1}_{{#3}1},{#1}_{{#3}2},{#1}_{{#3}3} } } \else
\ifcase 4{#1} {\ang { {#1}_{{#3}1},{#1}_{{#3}2},{#1}_{#3}3},{#1}_{{#3}4} }
\else {\ang { {#1}_{{#3}1},\ldots,{#1}_{{#3}{#2}}}}\fi\fi}
\newcommand{\veccz}[2]{\ifcase 3{#2} {\ang { {#1}_0,{#1}_2,{#1}_3 } } \else
\ifcase 4{#1} {\ang { {#1}_0,{#1}_2,{#1}_3,{#1}_{4} } }
\else {\ang { {#1}_0,\ldots,{#1}_{#2}}}\fi\fi}
\newcommand{\xve}[1]{\ifcase 3{#1} {x_1,x_2,x_3} \else 
\ifcase 4{#1} {x_1,x_2,x_3,x_4} \else {x_1,\ldots,x_{#1}}\fi\fi}
\newcommand{\yve}[1]{\ifcase 3{#1} {y_1,y_2,y_3} \else 
\ifcase 4{#1} {y_1,y_2,y_3,y_4} \else {y_1,\ldots,y_{#1}}\fi\fi}
\newcommand{\zve}[1]{\ifcase 3{#1} {z_1,z_2,z_3} \else 
\ifcase 4{#1} {z_1,z_2,z_3,z_4} \else {z_1,\ldots,z_{#1}}\fi\fi}
\newcommand{\ve}[2]{\ifcase 3#2 {{#1}_1,{#1}_2,{#1}_3} \else
\ifcase 4#2 {{#1}_1,{#1}_2,{#1}_3,{#1}_{4}}
\else {{#1}_1,\ldots,{#1}_{#2}}\fi\fi}
\newcommand{\ved}[3]{\ifcase 3#2 {{#1}_{{#3}1},{#1}_{{#3}2},{#1}_{{#3}3}} \else
\ifcase 4#2 {{#1}_{{#3}1},{#1}_{{#3}2},{#1}_{{#3}3},{#1}_{{#3}4}}
\else {{#1}_{{#3}1},\ldots,{#1}_{{#3}{#2}}}\fi\fi}
\newcommand{\fuve}[3]{
\ifcase 3#2
{{#3}({#1}_1),{#3}({#1}_2,{#3}({#1}_3)} \else
\ifcase 4#2
{{#3}({#1}_1),{#3}({#1}_2),{#3}({#1}_3),{#3}({#1}_4)}
\else
{{#3}({#1}_1),\ldots,{#3}({#1}_{#2})}\fi\fi}
\newcommand{\setmathchar}[1]{\ifmmode#1\else$#1$\fi}
\newcommand{\vlist}[2]{%
	\setmathchar{%
		\compound#2\one{#2}\two
		\ifcompound
			({#1}_1,\ldots,{#1}_{#2})
		\else
			\ifcat N#2
				({#1}_1,\ldots,{#1}_{#2})
			\else
				\ifcase#2
					({#1}_0)\or
					({#1}_1)\or
					({#1}_1,{#1}_2)\or 
					({#1}_1,{#1}_2,{#1}_3)\or
					({#1}_1,{#1}_2,{#1}_3,{#1}_4)\else 
					({#1}_1,\ldots,{#1}_{#2})
				\fi
			\fi
		\fi}}
\newif\ifcompound
\def\compound#1\one#2\two{%
	\def\one{#1}
	\def\two{#2}
	\if\one\two
		\compoundfalse
	\else
		\compoundtrue
	\fi}
\newcommand{\xwe}[1]{\ifcase 3{#1} {x_1\wedge x_2\wedge x_3} \else 
\ifcase 4{#1} {x_1\wedge x_2\wedge x_3\wedge x_4} \else {x_1\wedge \cdots \wedge
x_{#1}}\fi\fi}
\newcommand{\we}[2]{\ifcase 3#2 {\ang { {#1}_1\wedge {#1}_2\wedge {#1}_3 } } \else
\ifcase 4{#1} {\ang { {#1}_1\wedge {#1}_2\wedge {#1}_3\wedge {#1}_{4} } }
\else {\ang { {#1}_1\wedge \cdots\wedge {#1}_{#2}}}\fi\fi}
\newcommand{\st}{\mathrel{:}}
\newcommand{\implies}{\Rightarrow}
\newcommand{\into}{\rightarrow}
\newcommand{\s}[1]{\s_{#1}}
\newcommand{\monus}{\;\raise.5ex\hbox{{${\buildrel
    \ldotp\over{\hbox to 6pt{\hrulefill}}}$}}\;}
\newcounter{savenumi}
\newtheorem{theoremfoo}{Theorem}[section] 
\newenvironment{theorem}{\pagebreak[1]\begin{theoremfoo}}{\end{theoremfoo}}
\newtheorem{lemmafoo}[theoremfoo]{Lemma}
\newenvironment{lemma}{\pagebreak[1]\begin{lemmafoo}}{\end{lemmafoo}}
\newtheorem{conjecturefoo}[theoremfoo]{Conjecture}
\newtheorem{conventionfoo}[theoremfoo]{Convention}
\newenvironment{convention}{\pagebreak[1]\begin{conventionfoo}\rm}{\end{conventionfoo}}
\newtheorem{porismfoo}[theoremfoo]{Porism}
\newtheorem{gamefoo}[theoremfoo]{Game}
\newtheorem{corollaryfoo}[theoremfoo]{Corollary}
\newtheorem{claimfoo}[theoremfoo]{Claim}
\newtheorem{openfoo}[theoremfoo]{Open Problem}
\newtheorem{exercisefoo}{Exercise}
\newcommand{\fig}[1] 
{
 \begin{figure}
 \begin{center}
 \input{#1}
 \end{center}
 \end{figure}
}
\newtheorem{potanafoo}[theoremfoo]{Potential Analogue}
\newtheorem{notefoo}[theoremfoo]{Note}
\newenvironment{note}{\pagebreak[1]\begin{notefoo}\rm}{\end{notefoo}}
\newtheorem{notabenefoo}[theoremfoo]{Nota Bene}
\newtheorem{nttn}[theoremfoo]{Notation}
\newenvironment{notation}{\pagebreak[1]\begin{nttn}\rm}{\end{nttn}}
\newtheorem{empttn}[theoremfoo]{Empirical Note}
\newtheorem{examfoo}[theoremfoo]{Example}
\newtheorem{dfntn}[theoremfoo]{Def}
\newenvironment{definition}{\pagebreak[1]\begin{dfntn}\rm}{\end{dfntn}}
\newtheorem{propositionfoo}[theoremfoo]{Proposition}
\newenvironment{proof}
    {\pagebreak[1]{\narrower\noindent {\bf Proof:\quad\nopagebreak}}}{\QED}
\newcommand{\yyskip}{\penalty-50\vskip 5pt plus 3pt minus 2pt}
\newcommand{\blackslug}{\hbox{\hskip 1pt
        \vrule width 4pt height 8pt depth 1.5pt\hskip 1pt}}
\newcommand{\QED}{{\penalty10000\parindent 0pt\penalty10000
        \hskip 8 pt\nolinebreak\blackslug\hfill\lower 8.5pt\null}
        \par\yyskip\pagebreak[1]}
\newcommand{\BBB}{{\penalty10000\parindent 0pt\penalty10000
        \hskip 8 pt\nolinebreak\hbox{\ }\hfill\lower 8.5pt\null}
        \par\yyskip\pagebreak[1]}
\newtheorem{factfoo}[theoremfoo]{Fact}
\newenvironment{block}{\begin{list}{\hbox{}}{\leftmargin 1em
    \itemindent -1em \topsep 0pt \itemsep 0pt \partopsep 0pt}}{\end{list}}
\newcommand{\Erdos}{Erd\"os }
\newcommand{\Sarkozyns}{S{\'a}rk{\"o}zy}
\newcommand{\Szemeredi}{Szemer{\'e}di }
\begin{document}

\newcommand{\sdf}{\rm SDF }
\newcommand{\sdfns}{\rm SDF}
\newcommand{\sdfmod}[1]{{\rm SDFMOD}({#1})}

\renewcommand{\s}[1]{{\rm sdf}{(#1)}}
\newcommand{\smod}[1]{{\rm sdfmod}{(#1)}}

\newcommand{\sizem}{n^{0.7154}}
\newcommand{\size}{n^{0.7334}}
\newcommand{\newsize}{n^{0.7334\cdots}}
\newcommand{\oldsize}{n^{0.733077\cdots}}
 \newcommand{\onemsize}{n^{0.2666}}
\newcommand{\onemsizep}{n^{0.2666}}
\newcommand{\onemsizem}{n^{0.2666}}
\newcommand{\cbone}{c^{1/0.2666}}
\newcommand{\cbtwo}{c^{3.75}}
\newcommand{\COL}{\chi}

\title{Square-Difference-Free Sets of Size $\Omega(\newsize)$}

\author{
{Richard Beigel}
\thanks{Temple University,
Dept. of Computer and Information Sciences,
1805 N Broad St Fl 3,
Philadelphia, PA 19122.
\texttt{professorb@gmail.com}
}
\\ {\small Temple University}
\and
{William Gasarch}
\thanks{University of Maryland,
Dept. of Computer Science and Institute for Advanced Computer Studies,
	College Park, MD\ \ 20742.
\texttt{gasarch@cs.umd.edu}, Partially supported by NSF grant CCR-01-05413
}
\\ {\small Univ. of MD at College Park}
}

\date{}

\maketitle

\begin{abstract}
A set $A\subseteq \nat$ is {\it square-difference free} (henceforth \sdfns)
if there do not exist $x,y\in A$, $x\ne y$,  such that $|x-y|$ is a square.
Let $\s n$ be the size of the largest \sdf subset of $\{1,\ldots,n\}$.
Ruzsa has shown that
$$\s n = \Omega(n^{\half(1+ \log_{65} 7)}) = \Omega(\oldsize)$$
We improve on the lower bound by showing 
$$\s n = \Omega(n^{\half(1+ \log_{205} 12)})= \Omega(\newsize)$$
As a corollary we obtain a new lower bound on the quadratic van der Waerden numbers.
\end{abstract}

\section{Introduction}

\begin{notation}
$\nat$ is the set $\{1,2,3,\ldots\}$.
If $n\in\nat$, and $n\ge 1$ then $[n]=\{1,\ldots,n\}$.
\end{notation}

Van der Waerden proved the following using purely combinatorial techniques (\cite{VDW}, see also \cite{VDWs} for a purely
combinatorial proof with better bounds, \cite{GRS} for an exposition of both of those proofs,
and \cite{Gowers} for a proof using non-combinatorial techniques that provides the best known bounds).

\begin{theorem}\label{th:vdw}
For all $k,c\in\nat$, there exists $W=W(k,c)\in\nat$ such that,
for any $c$-coloring $COL:[W]\into [c]$, there are
$a,d\in\nat$, $d\ne 0$, such that
$$a, a+d, a+2d,\ldots,a+(k-1)d\in [W]$$
$$COL(a)=COL(a+d)=COL(a+2d)= \cdots= COL(a+(k-1)d).$$
\end{theorem}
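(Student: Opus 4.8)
The plan is to prove Theorem~\ref{th:vdw} by induction on the length $k$, with the base case $k\le 2$ handled by pigeonhole: any $c$-coloring of $[c+1]$ repeats a color, yielding a monochromatic progression of length $2$. For the inductive step I would assume that $W(k-1,c')$ exists for \emph{every} number of colors $c'$ and deduce that $W(k,c)$ exists for every $c$. The engine of the argument is the method of \emph{color focusing}: call a family of $r$ arithmetic progressions of length $k-1$, each monochromatic, pairwise of distinct colors, and all of whose one-step extensions coincide at a common point $f$, a \emph{fan of degree $r$ focused at $f$}. The key auxiliary claim, proved by an inner induction on $r$ for $0\le r\le c$, is that there is a length $N_r$ such that every $c$-coloring of $N_r$ consecutive integers either already contains a monochromatic progression of length $k$, or contains a fan of degree $r$ whose focus also lies in the interval. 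The case $r=0$ is vacuous, and once $r=c$ is reached the focus $f$ of the fan must have one of the $c$ colors, which then matches one of the $c$ progressions in the fan; extending that progression by one step gives the desired monochromatic progression of length $k$, so $W(k,c)=N_c$ will do.

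The inner induction step from $r-1$ to $r$ is where the hypothesis on $k-1$ enters. Partition a long interval into consecutive blocks of length $N_{r-1}$ and color each block by its entire internal color pattern, a coloring by $c^{N_{r-1}}$ "super-colors"; applying $W\!\left(k-1,\,c^{N_{r-1}}\right)$ to the sequence of blocks produces $k-1$ equally spaced, internally identical blocks. Inside the first of these, the inductive hypothesis for $r-1$ supplies either a monochromatic $k$-term progression (done) or a fan of degree $r-1$ focused at some point $f_0$ of that block. Translating this fan across the $k-1$ identical blocks and passing to "diagonal" progressions — replacing each common difference $d$ of the old fan by $d+\Delta$, where $\Delta$ is the integer spacing between consecutive identical blocks — yields $r-1$ new monochromatic $(k-1)$-term progressions, all focused at $f:=f_0+(k-1)\Delta$; adjoining the progression $f_0,\,f_0+\Delta,\,\ldots$ running through the $k-1$ translated copies of the old focus gives an $r$-th progression, again focused at $f$. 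If the color of $f_0$ coincides with one of the $r-1$ old colors we extend the corresponding old progression to a monochromatic $k$-term progression; otherwise the color of $f_0$ is new and we have built a fan of degree $r$ focused at $f$. Taking the number of blocks to be $2\,W\!\left(k-1,\,c^{N_{r-1}}\right)$ leaves room for $f$ to fall inside the interval, so $N_r = 2\,W\!\left(k-1,\,c^{N_{r-1}}\right)\cdot N_{r-1}$ works. I would isolate this fan-combination step as a stand-alone lemma to keep the induction readable.

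The main obstacle is not a single deep idea but the bookkeeping. One must keep the fan-combination honest: the monochromaticity of the diagonal progressions genuinely uses that the identical blocks agree \emph{as colored sequences}, which is exactly why blocks are super-colored by their full internal pattern rather than by something coarser. One must also check at every stage that the focus point and all the progressions physically fit inside the interval under consideration, and be disciplined about the order of quantifiers — for fixed $k$ the bound $W(k,c)$ must be produced uniformly from the whole family $\{W(k-1,c')\}_{c'}$, and the super-coloring forces invoking that family at the enormous parameter $c^{N_{r-1}}$. This is what makes the resulting bound grow faster than any fixed tower of exponentials — an Ackermann-type bound in $k$ — so at the end I would only remark that the construction is explicit but astronomically weak compared with the bounds of \cite{Gowers}, and that for the quantitative corollary on quadratic van der Waerden numbers we only need existence of $W(k,c)$, not any good estimate.
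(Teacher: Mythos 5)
The paper does not prove Theorem~\ref{th:vdw} at all; it is quoted as a known result with citations to van der Waerden, Shelah, Graham--Rothschild--Spencer, and Gowers. Your argument is the standard double-induction color-focusing proof (outer induction on $k$, inner induction on the fan degree $r$, with the block/super-coloring step invoking $W(k-1,c^{N_{r-1}})$), which is exactly the proof found in the cited references \cite{VDW} and \cite{GRS}, and as sketched it is correct, including the two points that are easy to get wrong: requiring the focus to lie inside the interval (needed so the progression through the copies of $f_0$ is genuinely monochromatic and so the $r=c$ endgame works) and the factor of $2$ in $N_r$ to leave room for the one-step extension.
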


\Szemeredi~\cite{density} proved the following theorem, which
implies Van der Waerden's theorem:

\begin{theorem}\label{th:sz}
For all $k\in\nat$, for all $0<\alpha<1$, for almost all $n$,
the following holds:
$$(\forall A\subseteq [n])[|A|\ge \alpha n \implies A\hbox{ has an arithmetic 
sequence of length $k$}].$$
\end{theorem}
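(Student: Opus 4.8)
This is Szemer\'edi's theorem, so any proof is a major undertaking; what follows is a sketch of the density-increment strategy, the approach descended from Roth's argument for $k=3$ and underlying Gowers's quantitative proof \cite{Gowers}. The plan is to induct on $k$. For $k\le 2$ there is nothing to prove. For the inductive step, fix $0<\alpha<1$ and $k$, and suppose $A\subseteq[n]$ satisfies $|A|\ge\alpha n$ but contains no $k$-term arithmetic progression. The goal is to show this forces $A$ to have density at least $\alpha+c(\alpha,k)$ on some arithmetic progression $P\subseteq[n]$ of length at least $n^{\epsilon(\alpha,k)}$. Iterating this increment $O(1/c)$ times would push the relative density above $1$, which is absurd once $n$ is large enough in terms of $\alpha$ and $k$; tracking the length loss at each stage yields a threshold $N(\alpha,k)$ above which the conclusion holds --- exactly the ``for almost all $n$'' in the statement.

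The engine of the inductive step is a dichotomy. Count, with multiplicity, the progressions $a,a+d,\ldots,a+(k-1)d$ all of whose terms lie in $A$. If $A$ behaved randomly this count would be roughly $\alpha^k n^2$, which exceeds the number of degenerate ($d=0$) progressions and hence forces a genuine $k$-AP. The relevant notion of ``random'' is smallness of the Gowers uniformity norm $U^{k-1}$: a generalized von Neumann inequality shows that if $\|1_A-\alpha\|_{U^{k-1}}$ is small then the $k$-AP count is within $o(n^2)$ of $\alpha^k n^2$. So we may assume $\|1_A-\alpha\|_{U^{k-1}}\ge\delta(\alpha)$, and here one invokes the inverse theorem for the $U^{k-1}$ norm: a bounded function with large $U^{k-1}$ norm must correlate with a structured object of bounded complexity --- a Fourier character for $k=3$, a genuinely quadratic phase for $k=4$, and in general a $(k-2)$-step nilsequence. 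An equidistribution-and-pigeonhole argument then chops $[n]$ into long subprogressions on which that structured object is nearly constant, and on one of them $A$ must beat its average density, delivering the increment.

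The main obstacle is precisely that inverse theorem for $U^{k-1}$ when $k\ge 4$: for $k=3$ it is a one-line Fourier computation, but already $k=4$ needs Gowers's local-inverse machinery, and the general statement is the Green--Tao--Ziegler inverse theorem, among the deepest inputs in the subject. If one is content with tower-type rather than Gowers-type bounds, this can be bypassed: encode the potential $k$-APs of $[n]$ as the edges of an auxiliary $(k-1)$-uniform hypergraph, apply the hypergraph regularity lemma together with its counting lemma, and conclude via the hypergraph removal lemma that either $A$ contains many $k$-APs or $A$ is extremely sparse --- the R\"odl--Nagle--Schacht--Skokan / Gowers / Tao route, which is self-contained modulo the (long but elementary) regularity apparatus. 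In either development I would first write out the $k=3$ case in full as a warm-up: it already exhibits the dichotomy in its simplest incarnation and fixes the notation and the induction scheme for everything that follows.
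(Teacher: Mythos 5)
This theorem is Szemer\'edi's theorem, and the paper does not prove it: it is quoted as background, with the proof deferred entirely to the citation \cite{density}. So there is no in-paper argument to compare yours against, and the only fair question is whether your proposal stands on its own. It does not, and you say as much yourself: every step that carries actual content is black-boxed. The generalized von Neumann inequality is routine, but the inverse theorem for the $U^{k-1}$ norm (Green--Tao--Ziegler in general, already formidable for $k=4$), the conversion of nilsequence correlation into a density increment on a long subprogression, and, on the alternative route, the hypergraph regularity, counting, and removal lemmas are each theorems whose proofs dwarf the surrounding scaffolding. What you have written is an accurate and well-organized map of two modern proof strategies, not a proof; as a submission for this statement it has a gap exactly as large as the theorem itself.

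Two smaller points. First, neither route you sketch is the proof the paper actually cites: Szemer\'edi's original argument \cite{density} is a purely combinatorial double induction built on an early form of the regularity lemma, quite different from the Gowers-norm density increment of \cite{Gowers} and from the hypergraph removal approach. Second, be a little careful with the heuristic count: the number of pairs $(a,d)$ with $a, a+d, \ldots, a+(k-1)d \in [n]$ is about $n^2/(k-1)$, so the ``random'' count of $k$-APs in $A$ is $\alpha^k n^2/(k-1)$ up to lower-order terms; this does not affect the argument but the constant matters when you compare against the $\approx \alpha n$ degenerate progressions with $d=0$. If you intend to actually write this up, the honest scope is the $k=3$ case (Roth's theorem), where the inverse step really is a short Fourier computation and the density increment can be carried out in full.
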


We now look at generalizations of Theorems~\ref{th:vdw} and~\ref{th:sz}.

Note that in Theorem~\ref{th:vdw} we have the sequence
$$a, a+d, \ldots, a+(k-1)d.$$
Why the functions $d, 2d, \ldots, (k-1)d$?
Can they be replaced by polynomials?
YES (with one condition):

\begin{theorem}\label{th:polyvdw}
For all $c\in\nat$, for all 
$p_1(x),\ldots,p_k(x)\in Z[x]$ such
that $(\forall i)[p_i(0)=0]$ (that is, they all have constant term 0),
there exists a natural number $W=W(p_1,\ldots,p_k;c)$ such that,
for all $c$-coloring $COL:[W]\into [c]$, there exists $a,d\in \nat$, $d\ne 0$, such that
$$a, a+p_1(d),  a+p_2(d), \ldots, a+p_k(d))\in [W]$$ 
$$COL(a)=COL(a+p_1(d))= COL(a+p_2(d))=\cdots= COL(a+p_k(d)).$$ 
\end{theorem}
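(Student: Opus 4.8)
A routine compactness argument reduces the statement to the following: every finite coloring $COL$ of $\nat$ and every system $p_1,\ldots,p_k\in\Z[x]$ with all $p_i(0)=0$ admit $a\in\nat$ and $d\ge 1$ with $COL(a)=COL(a+p_1(d))=\cdots=COL(a+p_k(d))$. The plan is to run the classical ``color-focusing'' proof of van der Waerden's theorem (Theorem~\ref{th:vdw}), in the polynomial form due to Walters, by induction along a well-ordering of polynomial systems, establishing at each system the statement for all numbers $c$ of colors at once. Order a system $\vec p=(p_1,\ldots,p_k)$ by the pair $(\deg\vec p,\,m_{\vec p})$, where $\deg\vec p=\max_i\deg p_i$ and $m_{\vec p}=\#\{i:\deg p_i=\deg\vec p\}$, compared lexicographically; this is a well-order. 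The base case $\deg\vec p\le 1$ is $p_i(x)=\lambda_i x$ with $\lambda_i\in\Z$: then $\{a\}\cup\{a+\lambda_i d:i\le k\}$ is a sub-configuration of a long two-sided arithmetic progression with step $d$, so Theorem~\ref{th:vdw}, applied with length $1+\sum_i|\lambda_i|$ and $c$ colors, delivers it (reversing the progression if $d<0$).

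For the inductive step ($\deg\vec p=D\ge 2$), mimic the linear argument: inside a window of bounded length one builds a family of ``partial'' monochromatic configurations that all share a common focus point $f$ and are realized by many distinct steps $d$ but carry pairwise distinct colors; then the color of $f$ agrees with that of one of the partial configurations, and splicing $f$ in produces a full monochromatic configuration for $\vec p$. Producing such a family amounts to solving, with base point near $f$, a ``differenced'' system obtained from $\vec p$ — roughly, the system governing the increments $p_i(d_0+d)-p_i(d_0)$ and the offsets $p_i(d)-p_j(d)$ as one slides the step from $d_0$ to $d_0+d$ — and the design of this reduction, following Bergelson's PET scheme, is arranged so that the differenced systems are strictly smaller in the $(\deg,m)$ order: either a leading-coefficient cancellation drops the degree, or the number of top-degree polynomials drops. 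The induction hypothesis, available for those smaller systems and all numbers of colors, is then fed a coloring of the set of admissible steps by the pattern of colors that $\vec p$ paints on the window around $f$; this yields the needed partial configurations, and a final pigeonhole over colors closes the step with a much larger but still finite bound. Compactness recovers the finite $W$ of the statement.

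\textbf{The crux.} In the linear case, focusing never alters the length of a progression, so the induction on $k$ is transparent; in the polynomial case the differencing step manufactures new polynomial systems, and essentially the whole content of the proof is the bookkeeping that certifies those systems are smaller in a well-founded order — this is Bergelson's PET induction, and making the combinatorial ``realized-step'' counting line up with it is the main obstacle. (Two alternative routes: prove the polynomial Hales--Jewett theorem and read the coordinates of a monochromatic combinatorial subspace as base-$B$ digits to extract a polynomial progression — but Hales--Jewett in polynomial form is strictly harder; or invoke the Bergelson--Leibman multiple-recurrence theorem via the Furstenberg correspondence — but that abandons the combinatorial setting in the spirit of this paper.) Note finally that the instance relevant to the quadratic van der Waerden numbers, $p_i(x)=i x^2$, is no easier: it is a degree-$2$, $k$-fold system and still requires a full pass of the differencing induction.
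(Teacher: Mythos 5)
You should first be aware that the paper does not prove Theorem~\ref{th:polyvdw} at all: it is stated as background, and the note immediately following it attributes the result to Bergelson and Leibman~\cite{pvdw} (via ergodic methods, through Theorem~\ref{th:polysz}) and to Walters~\cite{pvdww} (via a purely combinatorial argument); the paper only ever uses the statement, through its corollary Theorem~\ref{th:qvdw}. So there is no in-paper proof to compare against, and your proposal has to be judged as a reconstruction of the cited literature.

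As such, your outline correctly identifies Walters' combinatorial proof: compactness, a well-founded ordering of polynomial systems, the linear base case via ordinary van der Waerden (your base case is correct as written), and color focusing driven by induction over that ordering. But the inductive step, which you concede is ``essentially the whole content of the proof,'' is only gestured at, and there is a concrete problem with the part you do commit to: the well-order you propose, $(\deg\vec p, m_{\vec p})$ with $m_{\vec p}$ the \emph{number} of top-degree polynomials, is not the PET weight that actually descends. After one round of differencing, every polynomial of top degree whose leading coefficient differs from that of the polynomial you differenced against retains its degree and leading coefficient, so the count of top-degree polynomials need not drop (and intermediate systems typically acquire \emph{more} polynomials, not fewer); what provably drops is the number of \emph{distinct leading coefficients} (equivalence classes modulo lower-degree terms) occurring at the top degree. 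Without specifying the differenced system precisely --- it must include differences against all shifts accumulated in earlier focusing rounds, not just the two expressions you name --- and verifying descent for the correct weight, the induction does not close. The approach is the right one, but as written it is an outline with its load-bearing step missing; for the purposes of this paper, citing Walters or Bergelson--Leibman, as the authors do, is the appropriate resolution.
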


\begin{note}
Theorem~\ref{th:polyvdw} was proved for $k=1$ by Furstenberg~\cite{pvdwfurst} and
(independently) \Sarkozyns~\cite{pvdwsar}.
Bergelson and Leibman~\cite{pvdw} proved the general result
using ergodic methods. Walters~\cite{pvdww} proved
it using purely combinatorial techniques.
\end{note}

Theorem~\ref{th:sz} also has a polynomial analog:

\begin{theorem}\label{th:polysz}
For all $0<\alpha<1$, 
for all $p_1(x),\ldots,p_k(x)\in Z[x]$ such that $(\forall i)[p_i(0)=0]$,
for almost all $n$,
the following holds:
$$(\forall A\subseteq [n])[|A|\ge \alpha n \implies (\exists a,d\in \nat)
[a,a+p_1(d),a+p_2(d),\ldots,a+p_k(d)\in A]]$$
\end{theorem}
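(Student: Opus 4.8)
The plan is to abandon any attempt at a direct combinatorial attack and instead follow the Furstenberg ergodic-theory route (this is how Bergelson and Leibman obtained the result). The first move is a routine compactness reduction: the displayed ``for almost all $n$'' statement is a consequence of the assertion that every $A\subseteq\Z$ of positive upper Banach density contains a configuration $a,\,a+p_1(d),\ldots,a+p_k(d)$ with $d\ge 1$. Indeed, if the finitary statement failed there would be an $\epsilon>0$ and arbitrarily long intervals carrying sets of density $\ge\alpha$ with no such configuration; a diagonal argument over their indicator functions (or an ultralimit) assembles a single set of positive upper Banach density with no configuration. So it suffices to prove the infinitary density version.

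Next, apply the Furstenberg correspondence principle to such an $A$: one produces an invertible measure-preserving system $(X,\mathcal B,\mu,T)$ and a set $E$ with $\mu(E)=\bar d(A)>0$ so that, for all integers $m_1,\ldots,m_k$,
$$\bar d\bigl(A\cap(A-m_1)\cap\cdots\cap(A-m_k)\bigr)\ \ge\ \mu\bigl(E\cap T^{-m_1}E\cap\cdots\cap T^{-m_k}E\bigr).$$
Hence the theorem reduces to a \emph{polynomial multiple recurrence} statement: for $p_1,\ldots,p_k\in\Z[x]$ with $p_i(0)=0$ and $\mu(E)>0$,
$$\liminf_{N\to\infty}\ \frac1N\sum_{n=1}^{N}\mu\bigl(E\cap T^{-p_1(n)}E\cap\cdots\cap T^{-p_k(n)}E\bigr)\ >\ 0.$$
In particular the summand is positive for some $n\ge 1$, and running the correspondence inequality backwards produces $a\in A$ with $a+p_i(n)\in A$ for every $i$; take $d=n$.

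The substance is this recurrence inequality, and I would prove it by \emph{PET induction} (Bergelson's polynomial exhaustion technique). Attach to a finite family of integer polynomials a complexity measure built from the multiset of leading degrees, grouping together polynomials that differ by something of strictly smaller degree; this measure is well-ordered. The inductive step studies the $L^2$ norm of the average $\frac1N\sum_n\prod_i T^{p_i(n)}\mathbf 1_E$ and applies the van der Corput difference lemma, which replaces the family $\{p_i(n)\}_i$ by the ``derived'' family $\{p_i(n+h)-p_j(n)\}_{i,j}$ for a judiciously chosen lowest-degree pivot $p_j$; one checks the derived family is strictly smaller in the complexity order. Iterating drives the problem down to families of linear polynomials, where positivity is exactly Furstenberg's multiple recurrence theorem---the ergodic form of \Szemeredins's theorem (Theorem~\ref{th:sz})---with the very bottom seed being single recurrence along squares, i.e.\ the Furstenberg--\Sarkozy theorem, handled by a Weyl-equidistribution/van der Corput estimate. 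Positivity is then transferred back up by locating a characteristic factor for the averages (an inverse limit of nilsystems, via Host--Kra--Leibman seminorm theory), on which the relevant polynomial orbit equidistributes.

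The main obstacle is the PET bookkeeping: one must design the complexity measure so that the van der Corput step \emph{provably} strictly decreases it for every admissible family, including the degenerate cases where two polynomials coincide after differencing or where a degree collapses, and one must simultaneously control the limits, keeping a uniform lower bound as van der Corput is applied $O(1)$ times. The other heavy input is the structure theory showing the characteristic factor is algebraic and that polynomial orbits on nilmanifolds are dense enough to yield a bound depending only on $\mu(E)$. Finally, the polynomial van der Waerden theorem (Theorem~\ref{th:polyvdw}) drops out as a corollary by the usual pigeonhole step: in any $c$-coloring of $[W]$ some color class has density $\ge 1/c$, so the density version applies.
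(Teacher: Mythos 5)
First, a point of comparison: the paper does not prove this theorem at all. It is quoted as background, attributed to Bergelson and Leibman (\cite{pvdw}), and everything the paper actually proves (the lower bound on $\s n$) is independent of it. So there is no in-paper argument to measure your proposal against; it can only be judged on its own terms as a reconstruction of the Bergelson--Leibman proof.

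As such a reconstruction, your roadmap is accurate in outline --- compactness reduction, Furstenberg correspondence, reduction to polynomial multiple recurrence, PET induction on a well-founded complexity ordering of polynomial families, with linear families and the Furstenberg--\Sarkozyns{} single-recurrence case at the bottom --- but it is not a proof. The two components you yourself label as ``the main obstacle'' and ``the other heavy input'' (the verification that the van der Corput differencing step strictly decreases the PET complexity in all degenerate cases, and the structure theory of the characteristic factors) are precisely where all the mathematical content lives; naming them does not discharge them. Two further concrete issues. (1) Your compactness step proposes to ``assemble a single set'' of positive upper Banach density from a sequence of finite counterexamples; a naive union of widely spaced blocks does not obviously inherit configuration-freeness, because a configuration $a, a+p_1(d),\ldots,a+p_k(d)$ with $d$ large can straddle blocks. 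The standard fix is to run the correspondence principle directly on the sequence (a weak-$*$ limit of orbit measures of the indicator functions), obtaining an invariant measure rather than a limit set; as written your reduction has a gap. (2) The appeal to Host--Kra--Leibman nilsystem characteristic factors is anachronistic relative to the 1996 Bergelson--Leibman argument, which instead runs PET relative to Furstenberg's tower of compact and weakly mixing extensions; either route can be made to work, but they are different proofs and you cannot mix-and-match their lemmas freely. In short: correct architecture, but the proposal defers the entire substance of the theorem to machinery it describes without establishing.
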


Bergelson and Leibman~\cite{pvdw} proved Theorem~\ref{th:polysz}.
This is how they obtained Theorem~\ref{th:polyvdw}.

\begin{definition}
A set $A\subseteq \nat$ is {\it square-difference free} (henceforth \sdfns)
if there do not exist $x,y\in A$, $x\ne y$,  such that $|x-y|$ is a square.
\end{definition}

\begin{definition}
Let $\s n $ be the size of the largest \sdf subset of $[n]$.
\end{definition}

Theorem~\ref{th:polysz} implies that, for any $0<\alpha<1$, for  almost all $n$,
$$\s n  \le \alpha n.$$
The following bounds are known on $\s n$.
\begin{itemize}
\item
\Sarkozyns~\cite{pvdwsar} proved
$$\s n \le O\bigg (\frac{n(\log\log n)^{2/3}}{(\log n)^{1/3}}\bigg ).$$
\item
Pintz, Steiger, and \Szemeredi \cite{PSS} proved
$$\s n \le O\bigg (\frac{n}{(\log n)^{c_n}}\bigg )$$
where $c_n\goes\infty$. (See also~\cite{wolfsq} for an exposition.)
\item
\Sarkozyns~\cite{pvdwsar2} showed that, for all $\epsilon<0.5$,
$\s n  \ge n^{0.5+\epsilon f(n)}$, where $f(n)=\frac{\log \log\log n}{\log\log n}$.
\item
Ruzsa~\cite{ruzsasq} proved $\s n \ge \Omega(n^{\log_{65} 7}) \ge \Omega(\oldsize)$.
\end{itemize}

We improve on Ruzsa's result by showing
$$\s n\ge \Omega(n^{\log_{205} 12}) =  \Omega(\newsize).$$
Our proof is similar to Ruzsa's; however,
we include proofs for completeness.

\section{An \sdf set of size $\ge \Omega(n^{0.5})$}

We present the result $\s n\ge n^{0.5}$, since it is easy
and, while known~\cite{pvdwsar2}, is not online and seems hard to find.
We do not need this result; however, it is very nice.

Recall Bertrand's Postulate\footnote{Bertrand's Postulate was
actually proven by Chebyshev's.
Bertrand conjectured
that, for all $n>3$, there is a prime between $n$ and $2n-2$.
Bertrand proved it for all $n<3\times 10^6$.
Cheshire proved it completely in 1850. It is usually stated as
we do below. A proof due to \Erdos can be found either in~\cite{HW}
or on Wikipedia.}
which we state as a lemma.

\begin{lemma}
For all $n$ there is a prime $p$ such that $n\le p\le 2n$.
\end{lemma}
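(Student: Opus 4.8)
The plan is to reproduce the classical argument of \Erdosns. First I would dispose of the case $n=1$ by hand (take $p=2$), and for $n\ge2$ argue by contradiction: assume there is no prime $p$ with $n<p\le 2n$ (equivalently, since $2n$ can be prime only when $n=1$, no prime in $[n,2n]$). Everything is then extracted from two size estimates on the central binomial coefficient $N={2n\choose n}$. For the lower bound, $N$ is the largest of the $2n+1$ summands of $(1+1)^{2n}$, so $N\ge 4^n/(2n)$. For the upper bound I would control, prime by prime, the exponent $R(p)$ of $p$ in the prime factorization of $N$.

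Using $N=(2n)!/(n!)^2$ and Legendre's formula, $R(p)=\sum_{j\ge1}\bigl(\floor{2n/p^j}-2\floor{n/p^j}\bigr)$; each summand is $0$ or $1$, and all summands with $p^j>2n$ vanish, so $p^{R(p)}\le 2n$ for every prime $p$. Two consequences: every prime $p>\sqrt{2n}$ has $R(p)\le1$; and for $n\ge5$ every prime $p$ with $2n/3<p\le n$ has $R(p)=0$ (such a $p$ satisfies $p>\sqrt{2n}$, so $p^2>2n$ and only the $j=1$ term survives, while $\floor{2n/p}=2$ and $\floor{n/p}=1$, giving $2-2\cdot1=0$). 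Combining these with the contradiction hypothesis that there are no primes in $(n,2n]$, and noting $2n/3\ge\sqrt{2n}$ for $n\ge5$, I obtain
$$N=\prod_{p\le\sqrt{2n}}p^{R(p)}\cdot\prod_{\sqrt{2n}<p\le 2n/3}p^{R(p)}\le(2n)^{\sqrt{2n}}\cdot\prod_{p\le 2n/3}p,$$
where the first product has fewer than $\sqrt{2n}$ factors, each at most $2n$.

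It remains to bound the primorial, for which I would prove $\prod_{p\le m}p<4^m$ for all integers $m\ge1$ by strong induction: the case of even $m>2$ is immediate (then $m$ is composite, so nothing new is multiplied in), and for odd $m=2k+1$ every prime in $(k+1,2k+1]$ divides ${2k+1\choose k}$, while $2{2k+1\choose k}\le{2k+1\choose k}+{2k+1\choose k+1}\le 2^{2k+1}$ gives ${2k+1\choose k}\le 4^k$; hence $\prod_{p\le 2k+1}p=\prod_{p\le k+1}p\cdot\prod_{k+1<p\le 2k+1}p<4^{k+1}\cdot4^k=4^m$. Feeding $\prod_{p\le 2n/3}p<4^{2n/3}$ into the displayed inequality yields $4^n/(2n)\le(2n)^{\sqrt{2n}}\,4^{2n/3}$, i.e.\ $4^{n/3}\le(2n)^{\sqrt{2n}+1}$; taking logarithms, the left side grows linearly in $n$ while the right side grows like $\sqrt{n}\log n$, so the inequality fails for all $n$ beyond an explicit threshold $n_0$. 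For $n<n_0$ I would verify the claim directly along a chain of primes $2,3,5,7,13,23,43,83,163,317,631,\ldots$ in which each is less than twice the previous (extended as far as $n_0$ requires): the largest such prime that is $\le 2n$ then automatically lies in $[n,2n]$.

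\textbf{Main obstacle.} There is no deep difficulty — the heart of the proof is simply the tension between $N\ge 4^n/(2n)$ and the factorization bound — but the fiddly parts are (a) a clean induction for the Chebyshev-type estimate $\prod_{p\le m}p<4^m$, and (b) making the final inequality genuinely self-contradictory, which forces one to pin down an explicit $n_0$ and then dispatch the finitely many small $n$ via the prime chain. I would present the argument at roughly the level of detail above and refer the reader to~\cite{HW} for the remaining routine arithmetic.
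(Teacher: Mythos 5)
This lemma is Bertrand's Postulate, which the paper does not prove at all---it is stated as a known result, with a footnote deferring to the \Erdosns{} proof in~\cite{HW} or on Wikipedia---and your proposal is a correct, essentially complete outline of precisely that \Erdosns{} argument: the tension between the lower bound $4^n/(2n)$ on $\binom{2n}{n}$ and the Legendre-exponent bound $p^{R(p)}\le 2n$, the vanishing of $R(p)$ for $2n/3<p\le n$, the primorial estimate $\prod_{p\le m}p<4^m$ by the odd/even induction, and the Landau prime-chain for the finitely many small $n$. The only blemish is the slightly garbled parenthetical about $(n,2n]$ versus $[n,2n]$ when setting up the contradiction (assuming no prime in $[n,2n]$ gives you, in particular, no prime in $(n,2n]$, which is all the argument needs), and this does not affect correctness.
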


\begin{theorem}\label{th:sqrt}
$$\s n = \Omega(n^{0.5}).$$
\end{theorem}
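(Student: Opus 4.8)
The plan is to exhibit an explicit \sdf set in the form of an arithmetic progression whose common difference is a prime of size roughly $\sqrt n$. First I would apply Bertrand's Postulate (the Lemma above) with $\lceil\sqrt n\rceil$ in place of $n$, obtaining a prime $p$ with $\sqrt n \le \lceil\sqrt n\rceil \le p \le 2\lceil\sqrt n\rceil \le 2\sqrt n + 2$. Then set
$$A = \{\, p,\, 2p,\, 3p,\, \ldots,\, kp \,\}, \qquad k = \floor{n/p}.$$
By construction $A \subseteq [n]$, and $|A| = k = \floor{n/p} \ge \frac{n}{2\sqrt n + 2} = \Omega(\sqrt n)$, which is exactly the size bound claimed. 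So the only thing left is to verify that $A$ is \sdfns.

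For that, take two distinct elements $ip,\, jp \in A$; their difference is $|i-j|\,p$ with $1 \le |i-j| \le k-1$. The key numerical point is that $k \le p$: indeed $k \le n/p \le n/\sqrt n = \sqrt n \le p$, so $1 \le |i-j| \le p-1$ and in particular $p$ does not divide $|i-j|$. Now suppose $|i-j|\,p = s^2$ for some $s\in\nat$. Since $p$ is prime, $p \mid s^2$ forces $p \mid s$, hence $p^2 \mid s^2 = |i-j|\,p$, hence $p \mid |i-j|$ — a contradiction. Therefore no difference of two distinct elements of $A$ is a perfect square, i.e.\ $A$ is \sdfns, and the theorem follows.

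The proof has essentially no obstacle; the one place that needs a moment's care is the inequality $k \le p$ — equivalently, the fact that a nonzero difference of elements of $A$ lies strictly between $0$ and $p^2$, which is what prevents such a difference from being divisible by $p^2$ and hence from being a square. This is precisely why we need the prime $p$ to be at least $\sqrt n$, and Bertrand's Postulate simultaneously furnishes such a $p$ that is at most $2\sqrt n+2$, keeping $|A| = \Theta(\sqrt n)$.
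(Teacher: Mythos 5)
Your proof is correct and follows essentially the same approach as the paper: an arithmetic progression $\{p,2p,\ldots\}$ with prime common difference $p\approx\sqrt n$, where each nonzero difference contains exactly one factor of $p$ and hence cannot be a square. The only cosmetic difference is that you take $p$ just above $\sqrt n$ and truncate at $\floor{n/p}p$, whereas the paper takes $p$ just below $\sqrt n$ and runs the progression up to $p^2$.
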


\begin{proof}
By Bertrand's Postulate
there exists a prime $p$ such that
$$
\frac{n^{0.5}}{2} \le p \le n^{0.5}.
$$

Let

$$A= \{ p, 2p, 3p,\ldots, p^2 \}.$$

Clearly, $|A|= p \ge \Omega(\sqrt n )$.
We show that $A$ is \sdfns.

Let $ip$ and $jp$ be two elements of $A$. Note that
$$jp-ip=(j-i)p.$$
We can assume that $i<j$, so
$$1\le i<j\le p.$$
Thus we have $j-i<p$. Hence $(j-i)p$ has only one
factor of $p$, so $jp-ip$ cannot be a square.
\end{proof}

\section{An \sdf set of size $\ge \Omega(\newsize)$}

To obtain large \sdf sets, we will first work with \sdf sets with respect to various moduli.

\begin{convention}\label{co:mod}
Throughout this section when we deal with mod $m$
we will use the set $[m]=\{1,\ldots,m\}$
rather than the more traditional $\{0,\ldots,m-1\}$.
In calculations we may use 0 instead of $m$ for clarity.
For example, if we have that $b_1\equiv b_2 \pmod m $ then
we will feel free to write $b_1-b_2\equiv 0 \pmod m$.
\end{convention}

\begin{definition}
Let $n\in\nat$.
A set $A\subseteq [n]$ is {\it square-difference free mod $n$} (henceforth
$\sdfmod n$)
if there do not exist $x,y\in A$, $x\ne y$, such that $x-y$ is a square mod $n$.
\end{definition}

\begin{definition}
Let $\smod n$ be the size of the largest $\sdfmod n$ set.
\end{definition}

Note that $\smod n \le \s n$. We will obtain lower bounds
for $\s n $ by obtaining lower bounds for $\smod n$.
The next lemma shows how to construct such sets.

\begin{lemma}\label{le:mk}
Assume $m$ is squarefree, $k\ge1$, and
$B,X,S,Y$ are sets such that the following hold:
\begin{enumerate}
\item
$S$ is an $\sdfmod m$ subset of $[m]$.
\item
$X$ is an $\sdfmod {m^{2k-2}}$ subset of $[m^{2k-2}]$.
\item
$B = \{mz + b \mid  z \in \{0,\ldots,m-1\} \wedge b\in S\}$.
Note that $B\subseteq [m^2]$ and 
$|B|=m|S|$.
\item
$Y = \{m^2 x + s \pmod {m^{2k}} \mid  x \in X \wedge s \in B\}$. 
Since $Y$ is defined $\pmod {m^{2k}}$, 
when we use Convention~\ref{co:mod},
we have $Y\subseteq [m^{2k}]$.
Note that $|Y|=|X||B|=m|S||X|$.
\end{enumerate}
Then  $Y$ is an $\sdfmod{m^{2k}}$ subset of $[m^{2k}]$.
\end{lemma}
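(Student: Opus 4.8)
The plan is a ``base-$m$ digits'' argument. Take distinct $y_1,y_2\in Y$ and, for each, fix a representation $y_i\equiv m^2x_i+s_i\pmod{m^{2k}}$ with $x_i\in X$ and $s_i\in B$, and then split $s_i=mz_i+b_i$ with $z_i\in\{0,\ldots,m-1\}$ and $b_i\in S$. Since $y_1\neq y_2$ we have $(x_1,s_1)\neq(x_2,s_2)$, so at least one of the three ``digits'' $b_i$, $z_i$, $x_i$ differs between the two. Assuming for contradiction that $y_1-y_2$ is a square mod $m^{2k}$, say $y_1-y_2\equiv w^2\pmod{m^{2k}}$, I would work with the integer $c:=m^2(x_1-x_2)+m(z_1-z_2)+(b_1-b_2)$, which is congruent to $w^2$ modulo $m^{2k}$, and then treat three cases, using a different obstruction in each.

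First, if $b_1\neq b_2$: reducing modulo $m$ kills the first two terms, so $c\equiv b_1-b_2\pmod m$; since $m\mid m^{2k}$, $c$ being a square mod $m^{2k}$ forces $b_1-b_2$ to be a square mod $m$, contradicting that $S$ is $\sdfmod m$ (as $b_1,b_2\in S$ are distinct). Second, if $b_1=b_2$ but $z_1\neq z_2$ (so $s_1\neq s_2$): write $c=m\,u$ with $u=m(x_1-x_2)+(z_1-z_2)\equiv z_1-z_2\pmod m$; since $0<|z_1-z_2|<m$ we get $u\not\equiv 0\pmod m$, and as $m$ is squarefree some prime $p\mid m$ does not divide $u$, whence $v_p(c)=v_p(m)+v_p(u)=1$. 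But $c\equiv w^2\pmod{p^{2k}}$ with $2k\ge 2$, and a square modulo $p^2$ cannot have $p$-adic valuation exactly $1$ (if $p\mid w$ then $p^2\mid c$; if $p\nmid w$ then $p\nmid c$), a contradiction. This is one of the two places where squarefreeness of $m$ is essential.

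The remaining case is $s_1=s_2$, which forces $x_1\neq x_2$; this case is vacuous when $k=1$, since then $X\subseteq[m^{2k-2}]=[1]$. Here $c=m^2(x_1-x_2)$, so $m^2\mid w^2$, and squarefreeness of $m$ now gives $m\mid w$; writing $w=mw_1$ and dividing $c\equiv w^2\pmod{m^{2k}}$ through by $m^2$ yields $w_1^2\equiv x_1-x_2\pmod{m^{2k-2}}$, so $x_1-x_2$ is a square mod $m^{2k-2}$. Since $x_1,x_2\in X$ are distinct and $0<|x_1-x_2|<m^{2k-2}$, this contradicts that $X$ is $\sdfmod{m^{2k-2}}$, completing the argument. (The degenerate case $m=1$, and $k=1$ as just noted, should be flagged as trivial or vacuous.)

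The only real work is bookkeeping: keeping straight how ``square mod $m^{2k}$'' descends to ``square mod $d$'' for divisors $d\mid m^{2k}$, and matching each of the three ways two elements of $Y$ can differ to the correct obstruction --- a quadratic-nonresidue obstruction for the lowest-order $S$-digit, and a $p$-adic valuation obstruction both for the ``carry'' digit $z_i$ and for the highest-order $X$-digit. Squarefreeness of $m$ enters exactly twice (to extract a prime of valuation $1$, and to pass from $m^2\mid w^2$ to $m\mid w$), and the three-layer definition of $Y$ is engineered so that these obstructions are exhaustive; I expect that matching, rather than any single computation, to be the only subtle point.
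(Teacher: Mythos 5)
Your proof is correct and takes essentially the same route as the paper's: decompose each element of $Y$ into its three base-$m$ ``digits,'' reduce the hypothetical square congruence modulo $m$, then modulo $m^2$, then divide by $m^2$ (using squarefreeness to pass from $m\mid a^2$ to $m\mid a$) to land on a forbidden square difference in $X$. The only differences are organizational: the paper eliminates the digits sequentially (first $b_1=b_2$, then $z_1=z_2$) rather than by trichotomy on which digit differs, and it handles your middle case by reducing the equation modulo $m^2$ after writing $a=cm$ instead of your equivalent $p$-adic valuation count.
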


\begin{proof}
Suppose, by way of contradiction, that there exist two elements of $Y$, $y_1$ and $y_2$, whose
difference is a square mod $m^{2k}$.
By the definition of $Y$, we can write those elements as

\begin{itemize}
\item
$y_1=m^2x_1 + s_1$, where $x_1\in X$ and $s_1\in B$.
\item
$y_2=m^2x_2 + s_2$, where $x_2\in X$ and $s_2\in B$.
\end{itemize}

Since $s_1,s_2\in B$, 

\begin{itemize}
\item
$s_1=mz_1+b_1$, where $z_1\in \{0,\ldots,m-1\}$ and $b_1\in S$.
\item
$s_2=mz_2+b_2$, where $z_2\in \{0,\ldots,m-1\}$ and $b_2\in S$.
\end{itemize}

Hence

\begin{itemize}
\item
$y_1=m^2x_1 + mz_1+b_1$, where $x_1\in X$, $z_1\in \{0,\ldots,m-1\}$, and $b_1\in S$.
\item
$y_2=m^2x_2 + mz_2+b_2$, where $x_2\in X$, $z_2\in \{0,\ldots,m-1\}$, and $b_2\in S$.
\end{itemize}

Since $y_1-y_2$ is a square mod $m^{2k}$ there exists $a,L$ such that

$$y_1-y_2=a^2+L_1m^{2k}$$

$$m^2(x_1-x_2) + m(z_1-z_2) + (b_1 - b_2) = a^2 + Lm^{2k}.$$

Reducing this equation mod $m$, we obtain

$$b_1 - b_2 \equiv  a^2 \pmod m.$$

By the definition of $S$, $b_1=b_2$, so we have

$$a^2 \equiv 0 \pmod m.$$

Since $m$ divides $a^2$, and $m$ is squarefree,  $m$ divides $a$.
Hence $a=cm$, so $a^2=c^2m^2$. Thus we have

$$m^2(x_1-x_2) + m(z_1-z_2) = c^2m^2 + Lm^{2k}.$$

Reducing this equation mod $m^2$, and using the fact that $k\ge 1$, we obtain

$$m(z_1-z_2) \equiv  0 \pmod {m^2}.$$

Since $0\le z_1,z_2 \le m-1$, we have $m\,|z_1-z_2| < m^2$, hence
$z_1=z_2$.

Since $b_1=b_2$ and $z_1=z_2$, we now have
$$m^2(x_1-x_2) = c^2m^2 + Lm^{2k}$$

Dividing by $m^2$, we obtain
$$(x_1-x_2) = c^2 + Lm^{2k-2}.$$

Recall that $x_1,x_2\in X$. By the condition on $X$, there do not
exist two elements of $X$ whose difference is a square mod $m^{2k-2}$.
Since the last equation states that the difference of two elements of
$X$ is a square mod $m^{2k-2}$, this is a contradiction.
\end{proof}

\begin{lemma}\label{le:iterate}
For all $k\ge 1$, 
$\smod {m^{2k}} \ge m\cdot\smod{m}\cdot\smod {m^{2k-2}}.$
\end{lemma}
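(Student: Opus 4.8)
The plan is to read off Lemma~\ref{le:iterate} as an immediate consequence of Lemma~\ref{le:mk}, by instantiating the free sets $S$ and $X$ optimally. (I take the standing hypothesis of this section that $m$ is squarefree to be in force, since Lemma~\ref{le:mk} requires it.) Concretely, I would let $S\subseteq[m]$ be a largest $\sdfmod m$ subset, so $|S|=\smod m$, and let $X\subseteq[m^{2k-2}]$ be a largest $\sdfmod{m^{2k-2}}$ subset, so $|X|=\smod{m^{2k-2}}$. When $k=1$ this is the degenerate case $X\subseteq[1]=\{1\}$, which causes no trouble: a one-element set is vacuously $\sdfmod 1$, and $\smod 1=1$. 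From $m$, $S$ and $X$ I would then build the sets $B$ and $Y$ by exactly the recipe given in Lemma~\ref{le:mk}.

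All four hypotheses of Lemma~\ref{le:mk} then hold: $m$ is squarefree and $k\ge1$ by assumption, $S$ is $\sdfmod m$ and $X$ is $\sdfmod{m^{2k-2}}$ by the above choices, and $B$, $Y$ are defined by the prescribed formulas. So Lemma~\ref{le:mk} tells us both that $Y$ is an $\sdfmod{m^{2k}}$ subset of $[m^{2k}]$ and that $|Y|=m|S||X|$. Hence
\[
\smod{m^{2k}}\ \ge\ |Y|\ =\ m\cdot|S|\cdot|X|\ =\ m\cdot\smod m\cdot\smod{m^{2k-2}},
\]
which is the asserted inequality.

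I do not expect any real obstacle: the lemma is a bookkeeping corollary of Lemma~\ref{le:mk}. The one point worth a second look is the cardinality count $|Y|=m|S||X|$, which presupposes that $(x,s)\mapsto m^2x+s\pmod{m^{2k}}$ is injective on $X\times B$ and that $(z,b)\mapsto mz+b$ is injective on $\{0,\ldots,m-1\}\times S$ --- these hold because $B\subseteq[m^2]$ and $X\subseteq[m^{2k-2}]$, so the residue modulo $m^2$ and the residue modulo $m^{2k-2}$ separate the two coordinates. But both facts are already asserted within the statement of Lemma~\ref{le:mk}, so I would simply cite them rather than reprove anything.
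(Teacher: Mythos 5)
Your proposal is correct and matches the paper's own proof essentially verbatim: both instantiate Lemma~\ref{le:mk} with a maximum-size $\sdfmod m$ set $S$ and a maximum-size $\sdfmod{m^{2k-2}}$ set $X$ and read off the cardinality of $Y$. Your extra remarks on the $k=1$ case and on injectivity are sound but not needed beyond what Lemma~\ref{le:mk} already asserts.
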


\begin{proof}
Let $S$ be an $\sdfmod{m}$ set of size $\smod{m}$
and  let $X$ be an $\sdfmod{m^{2k-2}}$ set of size
$\smod{m^{2k-2}}$.
By Lemma~\ref{le:mk} there exists $Y$, an $\sdfmod{m^{2k}}$ set
of size $m\cdot\smod{m}\cdot\smod {m^{2k-2}}$.
Hence
$$\smod {m^{2k}} \ge m\cdot\smod{m}\cdot\smod {m^{2k-2}}.$$
\end{proof}

\begin{lemma}\label{le:build}
Assume that there exists a squarefree $m$ and a set $S\subseteq [m]$ such
that $S$ is $\sdfmod m$.
Then $\s n \ge \Omega(n^{\half(1+\log_m |S| )})$.
(The constant implicit in the $\Omega$ depends on $m$.)
\end{lemma}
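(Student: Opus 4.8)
The plan is to iterate Lemma~\ref{le:iterate} starting from a trivial base case, extract a clean bound on $\smod{m^{2k}}$, and then convert it to a bound on $\s n$ for \emph{all} $n$ (not just powers of $m^2$) using the obvious monotonicity of $\s\cdot$. First I would record two elementary facts. One: $\smod 1 = 1$, since $[1]=\{1\}$ contains no two distinct elements and so $\{1\}$ is vacuously $\sdfmod 1$; this will anchor the induction. Two: $\smod m \ge |S|$, since $S$ is by hypothesis an $\sdfmod m$ subset of $[m]$.

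Next I would prove by induction on $k\ge 0$ that $\smod{m^{2k}} \ge (m\,|S|)^k$. The base case $k=0$ is just $\smod 1\ge 1$. For the inductive step, Lemma~\ref{le:iterate} applies because $m$ is squarefree, and combined with $\smod m\ge|S|$ and the inductive hypothesis it gives
$$\smod{m^{2k}} \ge m\cdot\smod m\cdot\smod{m^{2k-2}} \ge m\,|S|\cdot(m\,|S|)^{k-1} = (m\,|S|)^k .$$
Since $\smod N\le\s N$ for every $N$, this yields $\s{m^{2k}}\ge(m\,|S|)^k$. The exponent bookkeeping is routine: writing $n=m^{2k}$ we have $k=\half\log_m n$, so $m^k=n^{1/2}$ and $|S|^k = m^{\,k\log_m|S|} = n^{\half\log_m|S|}$, whence $(m\,|S|)^k = n^{\half(1+\log_m|S|)}$. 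Thus $\s{m^{2k}}\ge (m^{2k})^{\half(1+\log_m|S|)}$ for all $k$.

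Finally, for arbitrary $n$ I would take the largest $k$ with $m^{2k}\le n$, so that $n<m^{2k+2}$, i.e. $m^{2k}>n/m^2$. Monotonicity of $\s\cdot$ then gives
$$\s n \ge \s{m^{2k}} \ge (m^{2k})^{\half(1+\log_m|S|)} > (n/m^2)^{\half(1+\log_m|S|)} = m^{-(1+\log_m|S|)}\cdot n^{\half(1+\log_m|S|)},$$
which is exactly $\Omega(n^{\half(1+\log_m|S|)})$ with a constant depending only on $m$, as claimed. There is no deep obstacle here — all the real content lives in Lemmas~\ref{le:mk} and~\ref{le:iterate}, which are already established. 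The only points that need care are getting the base case $\smod 1 = 1$ right so the induction has a starting point, and choosing $k$ maximal with $m^{2k}\le n$ so that the passage from powers of $m^2$ to general $n$ costs only the fixed factor $m^{-(1+\log_m|S|)}$.
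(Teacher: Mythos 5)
Your proof is correct and follows essentially the same route as the paper: iterate Lemma~\ref{le:iterate} from the base case $\smod 1 = 1$ to get $\smod{m^{2k}} \ge (m|S|)^k$, then translate the exponent. The only difference is that you explicitly handle general $n$ via monotonicity and the choice of maximal $k$ with $m^{2k}\le n$, a step the paper leaves implicit in the $\Omega$-notation; this is a welcome bit of extra care, not a different argument.
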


\begin{proof}
By the premise, $\smod m \ge |S|.$
By Lemma~\ref{le:iterate}

$$(\forall k\ge 1)[\smod {m^{2k}} \ge m|S|\smod{m^{2k-2}}].$$

Hence

$$\smod {m^{2k}} \ge (m|S|)^{k}\smod{1}=(m|S|)^k$$

Let $n=m^{2k}$, so $k=\log_m \sqrt n$. Then

\[
\begin{array}{rl}
(m|S|)^{k}=&m^k(|S|)^k\cr
		=&m^k(|S|)^{\log_m \sqrt n}\cr
            =&n^{0.5} |S|^{\log_m \sqrt n}\cr
\end{array}
\]

Note that

$$|S|^{\log_m \sqrt n}= (\sqrt n)^{\log_m |S|} = n^{0.5\log_m |S|}.$$

Hence, for $n=m^{2k}$,

$$
\smod{m^{2k}}\ge(m|S|)^k=n^{0.5} n^{0.5\log_m |S|}=n^{\half(1+\log_m |S|)}.
$$

Thus we have

$$\smod{n} \ge \Omega(n^{\half(1+\log_m |S|)}).$$

\end{proof}

\begin{theorem}\label{th:main}~
$\s n \ge \Omega(n^{\log_{205} 12}) \ge  \Omega(\newsize)$.
\end{theorem}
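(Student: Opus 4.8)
The plan is to apply Lemma~\ref{le:build} with the squarefree modulus $m=205=5\cdot 41$. By that lemma it suffices to produce a single $\sdfmod{205}$ subset $S$ of $[205]$ with $|S|=12$: then $\s n\ge\Omega(n^{\half(1+\log_{205}12)})$, and since $\half(1+\log_{205}12)=0.7334\cdots>0.733077\cdots=\half(1+\log_{65}7)$ this both gives $\s n\ge\Omega(\newsize)$ and improves on Ruzsa's bound. So the whole theorem comes down to exhibiting one $12$-element $\sdfmod{205}$ set.

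To build such a set, identify $[205]$ with $\Z/5\times\Z/41$ by the Chinese Remainder Theorem; an integer is a square mod $205$ iff its class mod $5$ is a square mod $5$ and its class mod $41$ is a square mod $41$ (with $0$ counting as a square). Hence $S$ is $\sdfmod{205}$ precisely when every pair of distinct elements of $S$ has a difference that is a quadratic non-residue in at least one coordinate. Sort $S$ by residue mod $5$ into classes $V_0,\dots,V_4$. Two elements in the same class, or in classes whose mod-$5$ residues differ by $\pm1$, must then have mod-$41$ parts whose difference is a non-residue mod $41$, whereas elements whose mod-$5$ residues differ by $\pm2$ are automatically safe. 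Since a largest $\sdfmod{41}$ set has only $5$ elements --- equivalently the Paley graph on $\Z/41$ has independence number $5$, which can be verified directly --- a short counting argument forces all five $V_i$ to be nonempty with $(|V_0|,\dots,|V_4|)=(2,3,2,3,2)$ up to rotation, and forces four of the five ``consecutive'' unions $\pi_{41}(V_i)\cup\pi_{41}(V_{i+1})$ to be maximum independent sets of that Paley graph (the remaining one a $4$-element independent set). A finite search over the maximum independent sets of the Paley graph on $\Z/41$ then produces such a configuration, hence an explicit $12$-element set $S$; one verifies directly that none of the ${12\choose 2}=66$ pairwise differences of $S$ is a square mod $205$, and Lemma~\ref{le:build} completes the proof.

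I expect the main obstacle to be exactly this search for the base set. The obvious construction --- take the product $S_5\times S_{41}$ of a largest $\sdfmod{5}$ set ($|S_5|=2$) with a largest $\sdfmod{41}$ set ($|S_{41}|=5$), which is easily seen to be $\sdfmod{205}$ --- yields only $10$ elements and an exponent below Ruzsa's. Reaching $12$ requires exploiting the slack coming from pairs of mod-$5$ residue classes that differ by a non-residue: for those the corresponding mod-$41$ ``slices'' carry no mutual constraint and may overlap and be reused, and one must then work out precisely how the maximum independent sets of the Paley graph on $\Z/41$ can be overlapped to fit the forced $(2,3,2,3,2)$ pattern. That combinatorial bookkeeping is what is carried out by computer; it remains possible that some larger squarefree modulus does better still, but $m=205$ with $|S|=12$ is what this search turns up.
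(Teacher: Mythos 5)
Your overall route is the same as the paper's: reduce the theorem, via Lemma~\ref{le:build} with the squarefree modulus $m=205$, to exhibiting a $12$-element $\sdfmod{205}$ set. Your structural analysis is correct and actually goes beyond what the paper records: by CRT a difference is a square mod $205$ iff it is a square mod $5$ and mod $41$, so sorting $S$ into residue classes $V_0,\dots,V_4$ mod $5$ forces $\pi_{41}(V_i)\cup\pi_{41}(V_{i+1})$ to be independent in the Paley graph on $\Z/41\Z$ for each $i$ (indices mod $5$), whence $|V_i|+|V_{i+1}|\le 5$ and $|S|\le\lfloor 25/2\rfloor=12$, with the $(2,3,2,3,2)$ profile forced at equality. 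The paper's witness set indeed has profile $(2,3,2,2,3)$, a rotation of this, so your necessary conditions are consistent with what the authors found.

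The gap is that you never produce the set. The only nontrivial content of this theorem, once Lemma~\ref{le:build} is in hand, is the \emph{existence} of a $12$-element $\sdfmod{205}$ set, and your argument for existence is the sentence ``a finite search \dots{} then produces such a configuration.'' The $(2,3,2,3,2)$ profile and the independence number $\alpha=5$ of the Paley graph on $41$ vertices are necessary conditions, not sufficient ones: nothing you wrote rules out the possibility that no choice of overlapping maximum independent sets is compatible with all five cyclic adjacency constraints simultaneously, in which case the true maximum for $m=205$ would be $11$ or less and the claimed exponent would not follow. A complete proof must exhibit the witness (or at least the five independent sets realizing the configuration) so that the ${12\choose 2}=66$ difference checks can actually be performed. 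The paper does exactly this, giving $S=\{0,2,8,14,77,79,85,96,103,109,111,181\}$; with that set written down, your argument closes and matches the paper's.
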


\begin{proof}
Let $m=205$ and 
$S = \{0, 2, 8, 14, 77, 79, 85, 96, 103, 109, 111, 181\}$.
Clearly, $m$ is square free. An easy calculation shows that there are no two elements of $S$ whose difference is a square mod $m$.
Note that $\log_m |S| =\log_{205} 12 >0.4668$.
Hence, by Lemma~\ref{le:build},

$$
\s n\ge \Omega(n^{\half(1+\log_m |S|)}) \ge \Omega(n^{\half(1+0.4668)})\ge \Omega(\newsize).
$$

\end{proof}

\begin{note}
Ruzsa used $m=65$ and $S=7$ to obtain his results.
\end{note}

\section{Square-Difference-Free Colorings}

The following is an easy corollary of Theorem~\ref{th:polyvdw}.

\begin{theorem}\label{th:qvdw}
There exists a function $f:\nat\into\nat$ 
such that the following holds:

If $[f(c)]$ is $c$-colored, there will be $x<y$
such that $x$ and $y$ are the same color and $y-x$ is a square.
\end{theorem}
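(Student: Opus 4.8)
The plan is to obtain Theorem~\ref{th:qvdw} as the $k=1$ instance of Theorem~\ref{th:polyvdw} applied to the single polynomial $p_1(x)=x^2$. First I would check the only hypothesis of Theorem~\ref{th:polyvdw} that needs verification: $p_1(x)=x^2$ lies in $Z[x]$ and has constant term $p_1(0)=0$, so the theorem applies. This gives a natural number $W=W(x^2;c)$ with the property that every $c$-coloring $COL:[W]\into[c]$ admits $a,d\in\nat$ with $d\ne 0$ such that $a,a+d^2\in[W]$ and $COL(a)=COL(a+d^2)$.

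Next I would define $f(c)=W(x^2;c)$ and verify that this $f$ witnesses the statement. Given any $c$-coloring of $[f(c)]$, apply the conclusion above to obtain $a,d$ with $d\ne 0$, $a,a+d^2\in[f(c)]$, and $COL(a)=COL(a+d^2)$. Set $x=a$ and $y=a+d^2$. Since $d\ne 0$ we have $d^2\ge 1$, hence $x<y$; the two points receive the same color; and $y-x=d^2$ is a perfect square. This is exactly the desired conclusion.

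There is essentially no obstacle here: the entire content is the specialization of the polynomial van der Waerden theorem to the one-polynomial family $\{x^2\}$, together with the trivial observation that $d\ne 0$ forces $d^2\ge 1$ and therefore $x<y$ (so that the two points are genuinely distinct and the square difference is nonzero). I would simply remark that the analogous statement for any fixed nonconstant $p\in Z[x]$ with $p(0)=0$ follows the same way, and that using the density version (Theorem~\ref{th:polysz}) in place of Theorem~\ref{th:polyvdw} would yield the corresponding density statement recovering the upper bound $\s n\le\alpha n$ already noted in the introduction.
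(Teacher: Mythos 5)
Your proof is correct and is exactly the argument the paper intends: the paper presents Theorem~\ref{th:qvdw} as an easy corollary of Theorem~\ref{th:polyvdw}, obtained by specializing to the single polynomial $p_1(x)=x^2$, which is precisely what you do. The verification that $d\ne 0$ gives $d^2\ge 1$ and hence $x<y$ is the only detail to check, and you handle it.
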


The proof of Theorem~\ref{th:polyvdw}~\cite{pvdww}
gives an enormous upper bound on $f$.
We can use our results to provide a lower bound.
The idea is as follows: Take a square-difference-free set, and
translate it $c$ times to obtain a $c$-coloring.

\begin{definition}
Let $A\subseteq [n]$.  $B$ is a {\it translate of $A$ relative to $n$}
if there exists $t$ such that
$$B= \{ x+t \st x\in A \} \cap [n].$$
We will omit the ``relative to $n$'' when $n$ is clear from context.
\end{definition}

The following lemma is Lemma 4.4 from~\cite{multiparty}; however,
it can also be viewed as a special case of the Symmetric Hypergraph
Lemma~\cite{GRS}.

\begin{lemma}\label{le:tran}
Let $A\subseteq [n]$.
There exist $c\le O(\frac{n\log n}{|A|})$ and
sets $A_1,\ldots,A_c$ that are translates of $A$ such that
$[n]=A_1 \cup \cdots \cup A_c$.
(Note that the lemma holds for any set $A$; however,
we will apply it when $A$ is \sdfns.)
\end{lemma}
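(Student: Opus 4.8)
The plan is to prove Lemma~\ref{le:tran} by the probabilistic method, choosing translation amounts at random and showing that a modest number of translates suffices to cover every point of $[n]$ with positive probability. First I would fix $A\subseteq[n]$ with $|A|=a$, and for a parameter $c$ to be chosen, pick shifts $t_1,\ldots,t_c$ independently and uniformly from a suitable range (say $\{-n,\ldots,n\}$, or residues mod a slightly enlarged interval so that wraparound can be handled cleanly). For each $t_j$ let $A_j=\{x+t_j \st x\in A\}\cap[n]$ be the corresponding translate. The goal is to show that for an appropriate $c=O(\frac{n\log n}{a})$, with positive probability $[n]=A_1\cup\cdots\cup A_c$.

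The key estimate is a lower bound on the probability that a fixed point $y\in[n]$ is covered by a single random translate $A_j$. The point $y$ lies in $A_j$ exactly when $y-t_j\in A$, i.e.\ when $t_j\in y-A$. Over the range from which $t_j$ is drawn, the number of good shifts is exactly $|A|=a$, so the probability that one translate misses $y$ is $1-\frac{a}{N}$, where $N$ is the size of the shift range (of order $n$). Since the $c$ translates are independent, the probability that \emph{all} of them miss $y$ is at most $(1-\frac{a}{N})^c\le e^{-ca/N}$. Taking a union bound over the $n$ points of $[n]$, the probability that some point is uncovered is at most $n\,e^{-ca/N}$. Choosing $c$ so that $ca/N>\ln n$, that is $c>\frac{N\ln n}{a}=O(\frac{n\log n}{a})$, makes this failure probability strictly less than $1$. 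Hence a covering family of translates exists, which is exactly the conclusion, and by construction each $A_j$ is a translate of $A$ relative to $n$.

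The step I expect to be the main obstacle is handling the boundary/wraparound bookkeeping so that the per-point covering probability is \emph{exactly} (or at least uniformly bounded below by) $\Theta(a/n)$ for every $y\in[n]$, independent of where $y$ sits in the interval. If one translates within $[n]$ naively, points near the ends of the interval are covered by fewer shifts than central points, which would spoil the clean union-bound count. The clean fix is to work with the cyclic group $\Z/N\Z$ for some $N$ of order $n$ (for instance the least $N\ge 2n$), embed $[n]$ and $A$ into it, choose $t_j$ uniformly in $\Z/N\Z$, and observe that there each point is hit by a given random translate with probability exactly $a/N$ by translation invariance; one then reads the cyclic translates back as ordinary translates relative to $n$ via the intersection with $[n]$. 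Once that symmetry is in place the remaining arithmetic is routine. I would remark that this is precisely the combinatorial core of the Symmetric Hypergraph Lemma of~\cite{GRS}, specialized to the vertex-transitive action of translation, which is why the result can be cited either from~\cite{multiparty} or derived directly as above.
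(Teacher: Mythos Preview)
The paper does not actually prove this lemma; it simply cites it as Lemma~4.4 of~\cite{multiparty} and as a special case of the Symmetric Hypergraph Lemma in~\cite{GRS}. Your probabilistic covering argument is correct and is exactly the standard proof behind both of those references, so there is nothing substantive to compare.

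One small simplification: you do not need the cyclic embedding. If you draw each shift $t_j$ uniformly from $\{-(n-1),\ldots,n-1\}$ (a set of size $2n-1$), then for every $y\in[n]$ the set $y-A$ lies entirely inside this range, so $\Pr[y\in A_j]=|A|/(2n-1)$ \emph{exactly}, uniformly in $y$. The union bound then gives the claimed $c=O\bigl(\frac{n\log n}{|A|}\bigr)$ with no boundary bookkeeping at all. Your $\Z/N\Z$ workaround is also fine (and your verification that a cyclic translate, intersected with $[n]$, is still an ordinary translate when $N\ge 2n$ is correct), but it is more machinery than the problem needs.
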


\begin{lemma}\label{le:coloring}
Let $c,n$ be such that 
$c\le O(\frac{n\log n}{\s n})$, and
let $f$ be the function from Theorem~\ref{th:qvdw}.
Then $f(c)\ge n$.
\end{lemma}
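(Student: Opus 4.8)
The plan is to convert a large \sdf set into a coloring of $[n]$ with few colors that has no monochromatic pair at square distance, and then read off the bound on $f$ from Theorem~\ref{th:qvdw}. The starting observation is that every translate of an \sdf set is again \sdfns: if $A\subseteq[n]$ is \sdf and $B=\{x+t\mid x\in A\}\cap[n]$, then for distinct $u=x+t$ and $v=y+t$ in $B$ we have $|u-v|=|x-y|$, which is not a square since $A$ is \sdfns; intersecting with $[n]$ only deletes elements and so cannot create a bad pair.

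I would then take $A$ to be a largest \sdf subset of $[n]$, so that $|A|=\s n$, and apply Lemma~\ref{le:tran} to $A$. This produces $c\le O(\frac{n\log n}{\s n})$ translates $A_1,\ldots,A_c$ of $A$ with $[n]=A_1\cup\cdots\cup A_c$. Define $\chi:[n]\into[c]$ by letting $\chi(x)$ be the least $i$ with $x\in A_i$. Each color class $\chi^{-1}(i)$ is a subset of the \sdf set $A_i$ and hence is itself \sdfns, so there is no $x<y$ in $[n]$ with $\chi(x)=\chi(y)$ and $y-x$ a square.

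Finally, I would invoke Theorem~\ref{th:qvdw} in contrapositive form: if $f(c)\le n$ then $[f(c)]\subseteq[n]$, so restricting $\chi$ to $[f(c)]$ gives a $c$-coloring of $[f(c)]$ with no monochromatic square difference, contradicting Theorem~\ref{th:qvdw}; hence $f(c)>n$, and in particular $f(c)\ge n$. There is essentially no obstacle here, but two points deserve care. First, the direction of the contrapositive: Theorem~\ref{th:qvdw} says that a long enough interval is \emph{forced} to contain a monochromatic square difference, so exhibiting a $c$-coloring of $[n]$ that avoids one is precisely a certificate that $[n]$ is not long enough, i.e.\ that $f(c)>n$. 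Second, the hypothesis on $c$ should be understood as asserting the existence of such a $c$ (namely the number of translates returned by Lemma~\ref{le:tran}) for which $f(c)\ge n$; that is the form in which the lemma will be used to convert the lower bound $\s n\ge\Omega(\newsize)$ of Theorem~\ref{th:main} into a lower bound on the quadratic van der Waerden numbers.
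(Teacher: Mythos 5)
Your proposal is correct and follows the paper's proof essentially verbatim: take a maximum \sdf set, cover $[n]$ by $c\le O(\frac{n\log n}{\s n})$ translates via Lemma~\ref{le:tran}, color by least index, and note each color class is \sdf so $f(c)\ge n$. The extra details you supply (translates of \sdf sets are \sdfns, and the contrapositive reading of Theorem~\ref{th:qvdw}) are points the paper leaves implicit, but the argument is the same.
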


\begin{proof}
Let $A\subseteq [n]$ be an \sdf set of size $\s n$.
By Lemma~\ref{le:tran},
there exist $c\le O(\frac{n\log n}{\s n})$ translates of $A$
such that the union of the translates covers all of $[n]$.
Call the translates $A_1,\ldots,A_c$.
Let $\COL$ be the $c$-coloring of $[n]$ that maps
a number $x$ to the least $i$ such that $x\in A_i$.
For $1\le i\le c$ let $C_i$ be the set of numbers that are colored $i$.
Since each $C_i$ is an \sdfns, this coloring has no
$x,y\in [n]$, $x\ne y$,  such that $\COL(x)=\COL(y)$ and $|x-y|$ is a square.
Hence $f(c)\ge n$.
\end{proof}

\begin{theorem}
$f(c) \ge \Omega(\cbtwo).$
\end{theorem}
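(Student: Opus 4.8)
The plan is to invert the coloring bound of Lemma~\ref{le:coloring} using the lower bound on $\s n$ supplied by Theorem~\ref{th:main}. The starting observation is that if $[n]$ admits a coloring with no monochromatic square difference using $m$ colors, then it admits such a coloring with any $c\ge m$ colors (simply leave $c-m$ colors unused), whence the defining property of $f$ forces $f(c)>n$. The proof of Lemma~\ref{le:coloring} shows that, for every $n$, $[n]$ does admit such a coloring using at most $C\cdot\frac{n\log n}{\s n}$ colors, where $C$ is the constant implicit in Lemma~\ref{le:tran}. Combining these two facts gives the implication I will use:
$$\hbox{if}\quad c \ge C\cdot\frac{n\log n}{\s n}\quad\hbox{then}\quad f(c) \ge n.$$

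Now I would feed in Theorem~\ref{th:main}. Set $\beta=\half(1+\log_{205}12)$, so that $\s n\ge\Omega(n^{\beta})$ and $1-\beta=\half(1-\log_{205}12)$; consequently $\frac{n\log n}{\s n}\le O(n^{1-\beta}\log n)$. Given $c$, take $n=\floor{c^{3.75}}$. Since $n^{1-\beta}\le c^{3.75(1-\beta)}$ and $\log n\le 3.75\log c$, this yields $C\cdot\frac{n\log n}{\s n}\le O\!\left(c^{3.75(1-\beta)}\log c\right)$. The crucial numerical fact is $3.75(1-\beta)<1$, equivalently $\log_{205}12>1-2/3.75=0.4\overline{6}$, which holds because $\log_{205}12>0.4668$ (as already recorded in the proof of Theorem~\ref{th:main}). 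Hence $c^{3.75(1-\beta)}\log c=o(c)$, so $C\cdot\frac{n\log n}{\s n}\le c$ for all sufficiently large $c$, and the displayed implication gives $f(c)\ge n=\floor{c^{3.75}}=\Omega(\cbtwo)$.

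I expect the only genuine delicacy to be the numerical check $3.75(1-\beta)<1$: the margin $1-3.75(1-\beta)$ is tiny (a few parts in $10^{4}$), but it only has to be positive in order to swallow the stray $\log c$ factor and the implied constants — which is exactly why the clean exponent $3.75$, rather than the marginally larger $1/(1-\beta)$, is the one quoted in the statement. The only other wrinkle is that Lemma~\ref{le:coloring} bounds the number of colors the construction \emph{uses}, not a number of colors we may \emph{prescribe}, but that gap is closed for free by padding a good coloring with unused colors. Everything else is routine arithmetic on top of Theorem~\ref{th:main} and Lemma~\ref{le:coloring}.
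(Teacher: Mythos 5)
Your proposal is correct and takes essentially the same route as the paper: both arguments invert the bound of Lemma~\ref{le:coloring} using the lower bound on $\s n$ from Theorem~\ref{th:main} and reduce to the same numerical fact that $3.75\cdot\half(1-\log_{205}12)<1$. Your version is slightly more careful than the paper's (you track the stray $\log n$ factor explicitly and note the color-padding monotonicity point), but the underlying argument is identical.
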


\begin{proof}
Fix $c$. We want to find an $n$ as small as possible such that

$$c\le O\biggl (\frac{n\log n}{\s n}\biggr).$$

Since $\s n \ge \Omega(\newsize)$ 

$$\biggl (\frac{n\log n}{\newsize}\biggr) \le O(\onemsizep).$$

Hence it will suffice to find an $n$ as small as possible such that

$$c\le O(\onemsizep).$$

We can take

$$n \ge \Omega(\cbone) \ge \Omega(\cbtwo).$$

Hence $f(c) \ge \Omega(\cbtwo).$

\end{proof}

\section{Open Problem}

Combining the upper bound of \cite{PSS} with our lower bound we have:

$$
\Omega(\newsize)\le\s n\le O\bigg(\frac{n}{\log^{c_n} n} \bigg )
$$
where $c_n\goes\infty$.

The open problem is to close this gap.
One way to raise the lower bounds is to find values of $m$ and $|S|$ that
satisfy the premise of Lemma~\ref{le:build} with a larger value
of $\log_m |S|$ then we obtained.

\section{Acknowledgments}

We would like to thank Georgia Martin and Edward Gan for
proofreading and commentary.
We would like to thank
Boris Bukh, Ben Green, and Nikos Frantzikinakis
for pointing us to the papers~\cite{ruzsasq}
and ~\cite{PSS}.

\end{document}